\documentclass[oneside, dvipsnames, 11pt]{amsart}

\usepackage[T2A,T1]{fontenc}
\usepackage[utf8]{inputenc}
\usepackage[english]{babel}

\usepackage{xcolor}
\usepackage{hyperref}

\definecolor{myblue}{RGB}{60, 80, 197}
\hypersetup{%
  colorlinks=true,
  linkcolor=OrangeRed,
  citecolor = myblue,
  urlcolor = Blue
}

\usepackage{tabularx}
\usepackage{array} 
\usepackage{ragged2e}

\usepackage{mathtext}
\usepackage{amsmath,amsthm,mathtools,amssymb}

\theoremstyle{definition}
\newtheorem{definition}{Definition}[section]
\newtheorem{proposition}[definition]{Proposition}
\newtheorem{corollary}[definition]{Corollary}
\newtheorem{theorem}[definition]{Theorem}

\theoremstyle{remark}
\newtheorem*{remark}{Remark}

\makeatletter
\newtheorem*{rep@theorem}{\rep@title}
\newcommand{\newreptheorem}[2]{%
\newenvironment{rep#1}[1]{%
 \def\rep@title{{\bf #2 \ref{##1}}}%
 \begin{rep@theorem}}%
 {\end{rep@theorem}}}
\makeatother
\newreptheorem{theorem}{Theorem}

\usepackage[left=30mm, right=30mm, bottom=25mm, top=25mm,includefoot]{geometry}
\usepackage{adjustbox}
\usepackage{epigraph}
\usepackage{textcase}
\usepackage{version}
\excludeversion{confidential}

\usepackage{caption}
\usepackage[colorinlistoftodos]{todonotes}

\usepackage{tikz}
\usepackage{tikz-cd} 
\usetikzlibrary{cd}
\usepackage{pgfplots}
\pgfplotsset{width=6cm,compat=newest}

\usepackage{pdfpages}

\begin{document}

  \title[On Simplicial Maps with $H(f) \neq 0$]{On Simplicial Maps with Non-Zero Hopf Invariant}

  \author{Sergei Vad. Fomin}

  \date{}

    \thanks{This work was supported by the Ministry of Science and Higher Education of the Russian Federation (agreement 075-15-2025-344 dated 29/04/2025 for Saint Petersburg Leonhard Euler International Mathematical Institute at PDMI RAS)}

  \address{Sergei Vadimovich Fomin: St. Petersburg Department of Steklov Institute of Mathematics, nab. Fontanka, 27, St. Petersburg, 191023, Russia}

  \email{sergei.vadimovich.fomin@gmail.com}

  \maketitle

  \tableofcontents
    
  \section{Introduction}
    
   Let $f\colon K \rightarrow L$ be a simplicial map between triangulated manifolds, i.e. the geometric realizations $|K|$ and $|L|$ are \textit{homeomorphic} to topological manifolds. Generally, it is interesting to establish various bounds on the complexity of the map $f$ under various topological constraints.
    
    A classic paper of S. Eilenberg provides a criterion for the vanishing of the Hopf invariant of a map between simplicial spheres in terms of the dimension of a preimage~\cite[Theorem II]{eilenberg_continuous_1940}. This result was used by K. V. Madahar and K. S. Sarkaria~\cite{madahar_minimal_2000},~\cite{madahar_simplicial_2002} to show that the triangulation of the Hopf fibration they constructed is minimal in terms of the number of vertices.

    The goal of this paper is to establish an analogous bound on simplicial complexity of the preimage of a \textit{regular point}\footnote{That is, a point lying in the interior of a maximal simplex.} for any simplical map (say, between triangulated spheres) with non-zero Hopf invariant. 
    
    \begin{definition}
        Let $K, L$ be simplicial complexes, and let $s$ be a fixed maximal simplex of $L$. Then
        \[N(f,s) := \#\{\sigma\in \operatorname{Max}(K)\,|\, f(\sigma)=s\},\]
        where $\operatorname{Max}(K)$ is the set of maximal simplices of the complex $K$.
    \end{definition}

    The main result is the estimate $N(f,s) > \frac{n^3}{6}$ for a simplicial map $f\colon K \rightarrow L$ from a triangulated $(2n-1)$-dimensional sphere to a triangulated $n$-dimensional sphere with non-zero Hopf invariant (Theorem~\ref{lower}).

    We also construct a family of examples demonstrating that $N(f,s)$ does not necessarily grow without bound as the Hopf invariant increases (Theorem~\ref{upper}). To construct the maps, we triangulate the Pontrjagin construction and use information about the image of $J$-homomorphism.
    
    In a previous joint paper~\cite{bludov_essential_2025} with M. V. Bludov and O. R. Musin, we considered the basic case of a simplicial map from a triangulated three-dimensional sphere to a triangulated two-dimensional sphere.\footnote{There, this value $N(f,s)$ was denoted as $\mu(f,s)$, as in a previous paper by Musin~\cite{musin_homotopy_2025}. The papers~\cite{bludov_essential_2025} and~\cite{musin_homotopy_2025} have definitions for $\mu(f,s)$ which coincide in the situation of codimension one, as considered in those papers, but differ in case of codimension larger that one, so we had to change notation.} In that case, we obtained the estimate $N(f,s) \geq 9$ for essential maps and, as
	a consequence, we proved that the triangulation of Hopf fibration constructed by K. V. Madahar and K. S. Sarkaria is not only minimal in terms of the number of vertices but also in terms of the number of tetrahedra.\footnote{As is clear from a double-counting argument and the Euler's formula, the triangulation is also minimal in triangles and in edges.} Furthermore, a family of examples was constructed, demonstrating that the estimates cannot be improved under stronger restrictions on the values of the Hopf invariant. 

\section{Preliminaries}

\subsection{Rational manifolds}
    A convenient level of generality for the formulation and proof of the main result is rational (homological) manifolds, so we recall some basic definitions.
    
    The are variants of the definition of a rational manifold which assume different topological regularity properties, such as finite cohomological dimension~\cite{borel_seminar_1960}. Following the definition in~\cite{milnor_characteristic_1974}, we assume our space to be the geometric realization of a simplicial complex. Moreover, all rational homology manifolds under consideration are assumed to be closed. Therefore, the complexes are necessarily finite.
 
\begin{definition}
    A finite simplicial complex $K$ is called a rational homology $n$-manifold if, for every point $x \in |K|$, the rational local homology groups $H_i(|K|, |K| \setminus \{x\})$ are zero for $i \neq n$ and isomorphic to $\mathbb{Q}$ for $i = n$.
\end{definition}

    Following~\cite[§20]{milnor_characteristic_1974}, we recall some basic properties of pseudomanifolds and (triangulated) rational homology manifolds.
\begin{definition}
    A finite simplicial complex $K$ is called a pseudomanifold\footnote{In~\cite{milnor_characteristic_1974}, pseudomanifolds are referred to as \textit{$n$-circuits}.} of dimension $n$ (or an $n$-pseudomanifold) if
    \begin{enumerate}
        \item every simplex is a face of an $n$-dimensional simplex;
        \item every $(n-1)$-dimensional simplex is a face of exactly two $n$-dimensional simplices;
        \item (\textit{strong connectedness}) for any two $n$-simplices $\sigma$ and $\tau$, there exists a sequence
        \[\sigma=\sigma_1, \sigma_2, \dots, \sigma_s=\tau,\]
        where $\sigma_i$ and $\sigma_{i+1}$ share a common $(n-1)$-dimensional face.
    \end{enumerate}
\end{definition}

\begin{definition}
    An orientation of an $n$-pseudomanifold is a choice of orientations for its maximal $n$-simplices such that for any $(n-1)$-simplex the two incident $n$-simplices induce opposite orientations. An $n$-pseudomanifold is orientable if it admits an orientation.

    An oriented $n$-pseudomanifold is an $n$-pseudomanifold with a fixed orientation.
\end{definition}

\begin{remark}
    The sum of $n$-simplices, oriented as in the previous definition, represents a homology class $[M]$ in $H_n(M; \mathbb{Z})$, which is called the fundamental class of an oriented (closed) $n$-pseudomanifold.
\end{remark}

\begin{proposition}
    Let $M$ be a (closed) orientable $n$-pseudomanifold. Then $H_n(X)$ is isomorphic to $\mathbb{Z}$, and any fundamental class (corresponding to any orientation) is a generator. Furthermore, $H^n(X)\cong\operatorname{Hom}(H_n(X), \mathbb{Z})\cong \mathbb{Z}$. 
\end{proposition}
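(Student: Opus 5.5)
I will work with simplicial homology of $M$ (writing $X=|M|$) with integer coefficients. Since $M$ is $n$-dimensional, there are no $(n+1)$-chains, so $H_n(M)=Z_n(M)=\ker \partial_n$. The plan is to show that every $n$-cycle is an integer multiple of the fundamental cycle, using the pseudomanifold axioms.

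First I fix an orientation and write any $n$-chain as $c=\sum_\sigma a_\sigma \sigma$, where $\sigma$ runs over the $n$-simplices with their chosen orientations. Take an $(n-1)$-simplex $\tau$. By axiom (2) it is a face of exactly two $n$-simplices $\sigma,\sigma'$, and by the definition of orientation these induce opposite orientations on $\tau$. So the coefficient of $\tau$ in $\partial c$ is $\pm(a_\sigma-a_{\sigma'})$. In particular, $\partial[M]=0$, so the fundamental class is indeed a cycle. Moreover, $\partial c=0$ forces $a_\sigma=a_{\sigma'}$ whenever $\sigma,\sigma'$ share a facet. By strong connectedness (axiom (3)), all coefficients $a_\sigma$ are then equal to one integer $a$, so $c=a[M]$.

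The elements $\sigma$ form a basis of the chain group, so $[M]\neq 0$. Hence $H_n(M)=\mathbb{Z}\cdot[M]\cong\mathbb{Z}$. The opposite orientation gives $-[M]$, and these are the only two orientations: by the same connectedness argument, an orientation is determined by its choice on a single simplex. Therefore every fundamental class is a generator.

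For cohomology I use the universal coefficient theorem:
\[
H^n(X)\cong \operatorname{Hom}(H_n(X),\mathbb{Z})\oplus \operatorname{Ext}(H_{n-1}(X),\mathbb{Z}).
\]
The main obstacle is showing that the Ext term vanishes, i.e.\ that $H_{n-1}(X)$ is torsion-free. Rather than proving torsion-freeness directly, I would compute $H^n$ straight from cochains. Since there are no $(n+1)$-simplices, $H^n=C^n/\delta C^{n-1}$. The coboundary of the dual cochain $\tau^*$ is $\pm(\sigma^*-\sigma'^*)$. By strong connectedness, these coboundaries span the kernel of the map $C^n\to\mathbb{Z}$ that sums the coefficients (with respect to the chosen orientations). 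This map is surjective and is exactly evaluation on $[M]$, so $H^n\cong\mathbb{Z}$ via $\varphi\mapsto\varphi([M])$.

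Comparing with the universal coefficient sequence, the map $H^n\to\operatorname{Hom}(H_n,\mathbb{Z})$ is then this isomorphism. Consequently the Ext term is zero, and the stated isomorphisms $H^n(X)\cong\operatorname{Hom}(H_n(X),\mathbb{Z})\cong\mathbb{Z}$ hold.
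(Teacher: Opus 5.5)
Your proof is correct. It differs from the paper's mainly in the cohomological half.

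The paper proves nothing directly here. It cites Fomenko--Fuchs and Seifert--Threlfall for $H_n\cong\mathbb{Z}$ with the fundamental class as generator; those proofs are essentially your chain-level strong-connectedness argument. It then imports from Seifert--Threlfall (Ch.~24, Theorem~II) the fact that $H_{n-1}(X)$ is free abelian. That kills the $\operatorname{Ext}$ term in the universal coefficient sequence, and $H^n(X)\cong\operatorname{Hom}(H_n(X),\mathbb{Z})$ follows at once.

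You avoid that external input by running the dual of the same argument on cochains. Since $C^{n+1}=0$, you have $H^n=C^n/\delta C^{n-1}$. The coboundaries $\delta\tau^*=\pm(\sigma^*-\sigma'^*)$ span exactly the kernel of the surjection $\varphi\mapsto\varphi([M])$, by strong connectedness. You then identify the UCT map with this isomorphism.

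Your route is self-contained and elementary. It also yields $\operatorname{Ext}(H_{n-1}(X),\mathbb{Z})=0$ as a by-product rather than as an input. Since $H_{n-1}$ is finitely generated, this means you actually reprove the torsion-freeness of $H_{n-1}(X)$ that the paper cites. The paper's version is shorter on the page, but it rests on that stronger structural theorem about $H_{n-1}$.
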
 
\begin{proof}
    The homology statement is proved in~\cite[Chapter 17.2]{fomenko_homotopical_2016}, and also in~\cite[Chapter 24]{seifert_1980}. Moreover, it is proven in Theorem II of~\cite[Chapter 24]{seifert_1980} that $H_{n-1}(X)$ is free abelian. Then, the cohomology statement follows from the universal coefficient theorem.
\end{proof}

\begin{corollary}
     Let $M$ be a (closed) orientable $n$-pseudomanifold. Then $H_n(X; \mathbb{Q})\cong \mathbb{Q}$, $H^n(X;\mathbb{Q})\cong\operatorname{Hom}(H_n(X;\mathbb{Q}), \mathbb{Q})\cong \mathbb{Q}$.
\end{corollary}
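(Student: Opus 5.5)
The plan is to deduce the corollary from the preceding Proposition by the universal coefficient theorem, applied twice: once for rational homology and once for rational cohomology. Throughout, $X$ denotes the underlying complex of $M$, as in the Proposition.

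For homology, the universal coefficient theorem gives a natural short exact sequence
\[0\to H_n(X;\mathbb{Z})\otimes\mathbb{Q}\to H_n(X;\mathbb{Q})\to \operatorname{Tor}(H_{n-1}(X;\mathbb{Z}),\mathbb{Q})\to 0.\]
Since $\mathbb{Q}$ is torsion-free, hence flat over $\mathbb{Z}$, the Tor term vanishes. We do not even need the fact, quoted in the proof of the Proposition, that $H_{n-1}(X)$ is free. By the Proposition, $H_n(X;\mathbb{Z})\cong\mathbb{Z}$, so
\[H_n(X;\mathbb{Q})\cong\mathbb{Z}\otimes\mathbb{Q}\cong\mathbb{Q},\]
and it is generated by the image of the fundamental class $[M]$.

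For cohomology, the universal coefficient theorem over the field $\mathbb{Q}$, applied to the rational chain complex $C_*(X;\mathbb{Q})=C_*(X)\otimes\mathbb{Q}$, gives
\[H^n(X;\mathbb{Q})\cong\operatorname{Hom}_{\mathbb{Q}}(H_n(X;\mathbb{Q}),\mathbb{Q})\oplus\operatorname{Ext}_{\mathbb{Q}}(H_{n-1}(X;\mathbb{Q}),\mathbb{Q}).\]
The Ext term vanishes because every $\mathbb{Q}$-vector space is projective. Hence
\[H^n(X;\mathbb{Q})\cong\operatorname{Hom}(H_n(X;\mathbb{Q}),\mathbb{Q})\cong\mathbb{Q}.\]
Alternatively, one can use the integral universal coefficient theorem with $\mathbb{Q}$ coefficients. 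There $\operatorname{Ext}_{\mathbb{Z}}(H_{n-1}(X),\mathbb{Q})=0$ because $\mathbb{Q}$ is an injective (divisible) $\mathbb{Z}$-module, and $\operatorname{Hom}(H_n(X),\mathbb{Q})\cong\operatorname{Hom}(\mathbb{Z},\mathbb{Q})\cong\mathbb{Q}$.

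There is no real obstacle here. The only point needing care is to check that the isomorphisms are the natural evaluation maps, so that the identification $H^n(X;\mathbb{Q})\cong\operatorname{Hom}(H_n(X;\mathbb{Q}),\mathbb{Q})$ in the statement is the canonical one, namely pairing a class with $[M]$. This is exactly what the universal coefficient theorem provides.
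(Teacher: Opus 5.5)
Your proof is correct and is essentially what the paper intends. The paper states the corollary without a separate proof, as an immediate consequence of the integral Proposition via the universal coefficient theorem, which is exactly your argument. You are also right that rationally the vanishing of $\operatorname{Tor}(-,\mathbb{Q})$ and $\operatorname{Ext}(-,\mathbb{Q})$ makes the freeness of $H_{n-1}(X)$ unnecessary.
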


\begin{proposition}
    Every connected component of a rational homology $n$-manifold is an $n$-pseudomanifold.\footnote{For a proof, one can check that the proof of a corresponding statement for integral pseudomanifolds in~\cite[Chapter 17.3]{fomenko_homotopical_2016} works for the rational homology manifolds as well.}
\end{proposition}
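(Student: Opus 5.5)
The plan is to check the three pseudomanifold axioms for a connected component $K$ of a rational homology $n$-manifold, using local homology at interior points of simplices. The basic tool is the standard identification of local homology with the homology of links. If $x$ lies in the interior of a $k$-simplex $\tau$, then
\[H_i(|K|,|K|\setminus\{x\};\mathbb{Q})\cong \tilde H_{i-k-1}(\operatorname{lk}\tau;\mathbb{Q}),\]
with the convention $\tilde H_{-1}(\emptyset)=\mathbb{Q}$. This follows from excision to the open star of $\tau$, which is the cone on the join $\partial\tau * \operatorname{lk}\tau$, together with the suspension isomorphism for joins with a sphere. Since every point of $|K|$ lies in the interior of a unique simplex, the hypothesis becomes: for every $k$-simplex $\tau$, the link $\operatorname{lk}\tau$ is a rational homology $(n-k-1)$-sphere. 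This means $\tilde H_j(\operatorname{lk}\tau;\mathbb{Q})=0$ for $j\neq n-k-1$ and $\cong\mathbb{Q}$ for $j=n-k-1$.

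\emph{Axiom (1).} Suppose $\tau$ is a maximal simplex of dimension $k$. Then $\operatorname{lk}\tau=\emptyset$, whose only nonzero reduced homology is $\tilde H_{-1}=\mathbb{Q}$. Hence $-1=n-k-1$, so $k=n$. In particular $K$ has no simplices of dimension greater than $n$, and every simplex is a face of an $n$-simplex.

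\emph{Axiom (2).} For an $(n-1)$-simplex $\tau$, the link is a finite $0$-dimensional complex (it has no edges by the dimension bound just proved). Its points correspond exactly to the $n$-simplices containing $\tau$. The condition $\tilde H_0(\operatorname{lk}\tau;\mathbb{Q})\cong\mathbb{Q}$ says the link has exactly two points, so $\tau$ is a face of exactly two $n$-simplices.

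\emph{Axiom (3), strong connectedness.} This is the step I expect to require the most care. Consider the equivalence relation on $n$-simplices generated by sharing an $(n-1)$-face, and let $A$ be the union of one equivalence class. The idea is to show, by induction on codimension, that $\operatorname{lk}\tau$ is connected for every simplex $\tau$ of dimension at most $n-2$. The hypothesis gives this directly: $\tilde H_0(\operatorname{lk}\tau;\mathbb{Q})=0$ whenever $n-k-1\geq 1$. A connected link whose own maximal simplices behave well then implies that all $n$-simplices containing $\tau$ lie in a single class. Concretely, by induction on dimension the link, being itself a rational homology manifold, is strongly connected. Adjacency of top simplices in $\operatorname{lk}\tau$ corresponds to adjacency of $n$-simplices through $(n-1)$-faces containing $\tau$.

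Now let $B$ be the union of the other classes. Any common point of $A$ and $B$ lies in a simplex $\tau$ of dimension at most $n-2$ meeting both, since an $(n-1)$-face is shared only within one class by axiom (2). This contradicts the statement that all $n$-simplices around $\tau$ are in one class. So $A$ and $B$ are disjoint closed sets covering $|K|$, and connectedness of $K$ forces $B=\emptyset$. For the base case $n-k-1=1$, the link is a connected graph in which every vertex has degree $2$ by axiom (2) applied in the link, i.e.\ a cycle, which is strongly connected. Links of simplices in a link are links in $K$, so the induction is legitimate.
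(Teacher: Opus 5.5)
Your proof is correct. The paper gives no argument of its own here: it only says, in a footnote, that the Fomenko--Fuchs proof for triangulated manifolds goes through with rational coefficients. Your write-up is therefore best read as a self-contained version of that transfer.

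What it buys is that it isolates the one place the hypothesis enters, namely $H_i(|K|,|K|\setminus\{x\};\mathbb{Q})\cong\tilde H_{i-k-1}(\operatorname{lk}\tau;\mathbb{Q})$. This turns the hypothesis into ``links of $k$-simplices are rational homology $(n-k-1)$-spheres'', and after that nothing Euclidean is used. Axioms (1) and (2) are read off from $\tilde H_{-1}$ and $\tilde H_0$ of links. Strong connectedness comes from induction on dimension through links, which is legitimate because $\operatorname{lk}_{\operatorname{lk}\tau}\sigma=\operatorname{lk}_K(\sigma\cup\tau)$.

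This induction is genuinely necessary, because the hypothesis at points of $\tau$ alone does not suffice. A link such as $\mathbb{R}P^2\vee S^2$ is a connected rational homology $2$-sphere whose top simplices form two adjacency classes. It is excluded only by the hypothesis at points of the simplices containing $\tau$: the link of the wedge vertex is two circles.

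Two small clean-ups. First, the induction is on the dimension of the manifold rather than ``on codimension''; connectedness of the links is immediate from $\tilde H_0=0$. Second, the final step does not need axiom (2), since two $n$-simplices sharing an $(n-1)$-face lie in the same class by definition.
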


For oriented rational homology manifolds, Poincaré duality holds with rational coefficients~\cite{borel_seminar_1960},~\cite[Theorems~V.9.2,~V.9.3, Corollary~V.10.2]{bredon_sheaf_1997}.

\begin{proposition}\label{poincare}
    Let $M$ be an oriented rational homology manifold, and let $N \subset M$ be a polyhedron. Then the map 
    \[D \colon H^k(M | N; \mathbb{Q}) \rightarrow H_{n-k}(N; \mathbb{Q}), \quad \alpha \mapsto [M] \cap \alpha,\]
    is an isomorphism. Setting $N=M$, we get that the map
    \[D \colon H^k(M; \mathbb{Q}) \rightarrow H_{n-k}(M; \mathbb{Q}), \quad \alpha \mapsto [M] \cap \alpha,\]
    is also an isomorphism.
\end{proposition}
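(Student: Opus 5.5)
The plan is to reduce the statement to a local-to-global argument in sheaf (or Čech) cohomology. The paper cites Borel's seminar and Bredon, so I would follow their route rather than a purely simplicial one. Here $H^k(M|N;\mathbb{Q})$ means $\varinjlim_U H^k(M, M\setminus U;\mathbb{Q})$ over open neighbourhoods $U$ of $N$. Because $N$ is a subpolyhedron, after a barycentric subdivision we may take $U$ to be a regular neighbourhood. Such neighbourhoods are cofinal, and each deformation retracts onto $N$. This identifies the direct limit with $H^k(M, M\setminus N)$ for polyhedral pairs. It also makes the cap product
\[
[M]\cap -\colon H^k(M, M\setminus U)\to H_{n-k}(U)\cong H_{n-k}(N)
\]
well defined and compatible with the maps of the direct system.

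\textbf{Step 1 (base case: a point).} For $N=\{x\}$ and $k=n$, the class $[M]$ restricts to a generator of $H_n(M,M\setminus x;\mathbb{Q})\cong\mathbb{Q}$. This is where orientability and the pseudomanifold structure (the previous proposition and corollary) enter: the fundamental class is a sum of coherently oriented $n$-simplices, and its image in local homology is the local orientation. For $k\neq n$ both sides vanish by the definition of a rational homology manifold. Since we work over a field, the universal coefficient theorem lets us pass from local homology to local cohomology, so the cap product is an isomorphism here.

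\textbf{Step 2 (induction over simplices).} Next I would prove the statement for $N$ an open star, or a closed simplex, of the subdivided complex; each of these is contractible to a point, so this reduces to Step 1. I would then induct on the number of simplices of $N$ using a Mayer--Vietoris argument. If $N=A\cup B$ with $A,B,A\cap B$ subpolyhedra, there are relative Mayer--Vietoris sequences
\[
\cdots\to H^k(M|A\cup B)\to H^k(M|A)\oplus H^k(M|B)\to H^k(M|A\cap B)\to\cdots
\]
and the ordinary Mayer--Vietoris sequence in homology, and $D$ maps the first to the second. Checking that the squares commute, up to the standard sign on the connecting map, is routine naturality of the cap product. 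The five lemma then finishes the induction. Finally, $N=M$ gives the absolute statement, since $H^k(M|M)=H^k(M)$.

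\textbf{Main obstacle.} The delicate point is Step 2's commutativity with the connecting homomorphisms, together with the identification of the direct limit. Rather than verify these by hand, I would follow Bredon's proof: represent both sides with sheaf cohomology with compact supports and the orientation sheaf. Since $M$ is a rational homology manifold, the orientation sheaf is locally constant of rank one over $\mathbb{Q}$, and it is trivial because $M$ is oriented. The theorem then becomes Bredon's duality $H^k_{\Phi}(M|N)\cong H_{n-k}(N)$ (V.9.2--9.3, V.10.2). The only thing left to check is that, for polyhedra, sheaf-theoretic and singular/simplicial theories agree, and these spaces are locally contractible, so they do. This is why the paper states the proposition with citations rather than a self-contained proof.
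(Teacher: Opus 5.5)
Your final fallback, which invokes Bredon's sheaf-theoretic duality and compares sheaf and singular theories on polyhedra, is exactly what the paper does. The paper offers no argument beyond the references to Borel and to Bredon V.9.2, V.9.3, V.10.2. The self-contained sketch in Steps 1--2, however, has a genuine gap, and it is not where you locate it.

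Your induction runs over closed subpolyhedra $A$, $B$, $A\cap B$, so its base case is a closed simplex $\sigma$, which you settle by noting that $\sigma$ is contractible. Contractibility controls the homology side $H_{n-k}(N)$. But $H^k(M|N)=H^k(M,M\setminus N)$ depends on the complement, and nothing formal makes $H^\ast(M|\sigma)\to H^\ast(M|x)$, $x\in\operatorname{Int}\sigma$, an isomorphism. For the graph with edges $[c,a]$, $[a,b]$ and $\sigma=[a,b]$, one has $H^\ast(M|\sigma)=0$ while $H^1(M|x)\cong\mathbb{Q}$.

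In a rational homology manifold the map is an isomorphism, but proving it needs two things:
\begin{itemize}
\item the local-homology hypothesis at every point of $\partial\sigma$;
\item the fact that the restriction maps between local groups along the faces of $\sigma$ are isomorphisms.
\end{itemize}
Together these are the local constancy of the orientation sheaf, which you also assert without proof in your last paragraph. This base case carries the content of the theorem. The commutativity of the Mayer--Vietoris ladder, which you flag as the delicate point, is routine.

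To repair the sketch, run the induction over open stars. First subdivide so that $N$ is a full subcomplex, and put $U=\bigcup_{v\in N}\operatorname{st}(v)$. Using join coordinates, $U$ deformation retracts onto $N$, and $M\setminus N$ deformation retracts onto $M\setminus U$. Shrunken neighbourhoods are cofinal among compacta in $U$, so $H^k(M|N)\cong H^k_c(U)$ and $H_{n-k}(U)\cong H_{n-k}(N)$.

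Then prove that $D\colon H^k_c(U)\to H_{n-k}(U)$ is an isomorphism by Mayer--Vietoris over the open stars $\operatorname{st}(v)$, as in Hatcher's proof of Poincar\'e duality. Nonempty finite intersections of these are open stars $\operatorname{st}(\tau)$ of simplices. For $x\in\operatorname{Int}\tau$, the closed star of $\tau$ is the cone $x\ast(\partial\tau\ast\operatorname{lk}\tau)$. This gives an honest deformation retraction of $\operatorname{st}(\tau)\setminus x$ onto the complements of shrunken stars, so this base case really does reduce to your Step 1. There the hypothesis enters through the fact that $[M]$ restricts to a generator of $H_n(M|x)$ at every point, because every link is a rational homology sphere. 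Alternatively, use the classical dual-block argument, in which each dual block is a rational homology cell.

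A smaller slip: for $U'\subset U$ the induced map goes $H^k(M,M\setminus U')\to H^k(M,M\setminus U)$, so this is not a direct system over shrinking $U$. What you want is simply $H^k(M,M\setminus N)$, as in the paper. The retraction that identifies it with $H^k(M,M\setminus U)$ is that of $M\setminus N$ onto $M\setminus U$, not of $U$ onto $N$.
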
 

\begin{remark}
    It is easy to see that the isomorphism $D$ is natural with respect to inclusions of subsets, i.e. for any polyhedra $N_1 \subset N_2$ in $M$, the following diagram commutes:
    \[ \begin{tikzcd}
        H^k(M|N_1;\mathbb{Q}) \arrow{r}{D} \arrow{d} & H_{n-k}(N_1; \mathbb{Q}) \arrow{d} \\
             H^k(M|N_2;\mathbb{Q}) \arrow{r}{D}&  H_{n-k}(N_2; \mathbb{Q}) 
            \end{tikzcd}
    \]
\end{remark}

    Finally, we note that the class of rational homology manifolds behaves nicely under simplicial maps.
    \begin{proposition}\label{preimage} 
        Let $K$ and $L$ be rational homology manifolds of dimensions $n$ and $m$, respectively, let $f \colon K \rightarrow L$ be a simplicial map, and let $p$ be a point in the interior of a maximal simplex of the complex $L$. Then $f^{-1}(p)$ is a rational homology manifold of dimension $n-m$.\footnote{In particular, it admits a triangulation.} If $K$ and $L$ are oriented, then $f^{-1}(p)$ admits an orientation that is compatible\footnote{That is, for any $q \in f^{-1}(p)$, the Künneth map on local homology groups $H_{n-m}(f^{-1}(p) \mid q; \mathbb{Q}) \times H_m(L \mid p; \mathbb{Q}) \rightarrow H_n(K \mid q; \mathbb{Q})$ maps the generators $([f^{-1}(p)]|_q, [L]|_p)$ to the generator $[K]|_q$.} with the orientations of $K$ and $L$.\footnote{This proposition is a step in the proof of Lemma 20.3 in~\cite{milnor_characteristic_1974}.}
    \end{proposition}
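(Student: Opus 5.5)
The plan is to work locally. First I identify a neighbourhood of $f^{-1}(p)$ in $|K|$ with a product $f^{-1}(p)\times U$, where $U$ is a small open disk around $p$ in the interior of the maximal simplex $s\ni p$. Then I use a Künneth-type argument on local homology to transfer the manifold property, and the orientation, from $K$ to the fibre.

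\textbf{Step 1: a cell structure on the fibre.} Let $s$ be the open maximal simplex of $L$ containing $p$; it has dimension $m$, since $L$ is a pseudomanifold. Any simplex $\sigma$ of $K$ with $p\in f(\sigma)$ must satisfy $f(\sigma)\supseteq s$, hence $f(\sigma)=s$. On each such $\sigma$ the map $f$ is a linear surjection onto $s$, so $\sigma\cap f^{-1}(p)$ is a convex polytope. These polytopes form a polyhedral complex; subdividing it without new vertices gives a triangulation of $f^{-1}(p)$.

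\textbf{Step 2: a product neighbourhood.} I would build a homeomorphism $\Phi\colon f^{-1}(p)\times \mathrm{int}(s)\to f^{-1}(\mathrm{int}\, s)$ commuting with projection to $\mathrm{int}(s)$. On a simplex $\sigma$ mapping onto $s$, write $\sigma$ as the join of the vertex groups $V_0,\dots,V_m$, where $V_i$ is the set of vertices of $\sigma$ mapped to the $i$-th vertex of $s$. A point of $\sigma$ over $t=(t_0,\dots,t_m)\in\mathrm{int}(s)$ is then a tuple $(t_i,x_i)$ with $x_i\in\Delta(V_i)$. So the fibre over $t$ is canonically $\prod_i\Delta(V_i)$, independently of $t$. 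These identifications are compatible with faces, hence they glue to a global $\Phi$.

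\textbf{Step 3: local homology.} For $q\in f^{-1}(p)$, excision and Step 2 give
\[H_*(K\mid q)\cong H_*(F\times\mathbb{R}^m\mid (q,0)),\qquad F=f^{-1}(p).\]
By the Künneth formula over $\mathbb{Q}$ the right-hand side equals $H_*(F\mid q)\otimes H_m(\mathbb{R}^m\mid 0)$, i.e. $H_*(F\mid q)$ shifted up by $m$. Since $K$ is a rational $n$-manifold, $H_i(F\mid q;\mathbb{Q})$ is $\mathbb{Q}$ for $i=n-m$ and $0$ otherwise. Finiteness of $F$ is clear.

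\textbf{Step 4: orientation.} The same Künneth isomorphism gives the orientation. Given $[K]|_q$ and $[L]|_p$, there is a unique generator of $H_{n-m}(F\mid q)$ whose Künneth product with $[L]|_p$ is $[K]|_q$. We must check that this choice is locally constant in $q$, so that it defines an orientation. It suffices to check this along the interiors of the top cells of $F$ and across their codimension-one faces. Local constancy follows from naturality of the Künneth map over small open sets $W\subset F$ together with local constancy of $[K]|_q$ over $W\times U$. Then $F$ inherits the required compatible orientation.

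\textbf{Main obstacle.} The main difficulty is Step 2: showing that the join-coordinate identifications glue continuously across shared faces and give a homeomorphism rather than just a continuous bijection. Compactness of the fibre-wise pieces and the fact that everything is piecewise linear on finitely many simplices should settle this.
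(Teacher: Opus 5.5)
Your proposal is correct and takes essentially the same route as the argument the paper relies on; the paper gives no proof of its own and defers to the proof of Lemma 20.3 in Milnor--Stasheff. There, as in your plan, the join decomposition of each simplex lying over $s$ yields $f^{-1}(\operatorname{Int} s)\cong f^{-1}(p)\times\operatorname{Int} s$, and the Künneth formula on local homology then transfers both the rational manifold condition and the compatible orientation to the fibre. The step you flag as the main obstacle is routine: $\Phi$ and its inverse are continuous on each member of a finite closed cover (the sets $(\sigma\cap F)\times\operatorname{Int} s$, respectively $\sigma\cap f^{-1}(\operatorname{Int} s)$), so the pasting lemma shows $\Phi$ is a homeomorphism.
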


\subsection{Functional cup product}
The functional cup product was introduced by Steenrod in~\cite{steenrod_cohomology_1949} as a generalization of the Hopf invariant. It can be used to define the Hopf invariant, for instance, for maps between (oriented) rational homology manifolds.

\begin{definition}
    Let $A$ be a ring, and let $f \colon X \rightarrow Y$ be a continuous map. Suppose the classes $[u] \in H^k(Y;A)$ and $[v] \in H^\ell(Y;A)$ are such that $f^\ast [u] = 0$ and $[u] \cup [v] = 0$. Let $a \in C^{k-1}(X;A)$ and $b \in C^{k+\ell-1}(Y;A)$ be cochains such that $\delta a = f^\ast u$ and $\delta b = u \cup v$. Then the functional cup product is defined as
    \[u \cup_f v := a \cup f^\ast v - f^\ast b.\]
    At the cohomology level, the functional cup product is well-defined\footnote{That is, independent of the choice of the cochains $a$ and $b$, and of the representing cochains of the classes $[u]$ and $[v]$.} as a map into the quotient group
    $H^{k+\ell-1}(X;A) / \big(f^\ast H^{k+\ell-1}(Y;A) + H^{k-1}(X;A) \cup f^\ast [v]\big)$.

\end{definition}
    Let $A = \mathbb{Z}$ or $\mathbb{Q}$, let $p = q = n > 1$, and let the spaces $X = |K|$ and $Y = |L|$ be oriented rational homology manifolds of dimensions $2n-1$ and $n$, respectively, where $L$ is connected and non-empty, and the map $f \colon X \rightarrow Y$ is such that $f^{\ast}|_{H^n(Y; A)} = 0$. Let $e \in H^n(Y;A)$ be the class on $Y$ defined by the relation $\langle e, [Y]\rangle = 1$. Then, in the definition of the functional product, taking the quotient is not required: $e \cup_f e \in H^{2n-1}(X; A)$.
    
    The number $H(f) \in A$ defined by
    \[ H(f) := \langle e \cup_f e, [X]\rangle \]
    will be called the \textbf{Hopf invariant} of the map $f$ between oriented rational homology manifolds of dimensions $2n-1$ and $n$ satisfying $f^{\ast}|_{H^n} = 0$.

    \begin{remark}
        It is straightforward to verify that the definitions of the Hopf invariant for $A=\mathbb{Z}$ and $A=\mathbb{Q}$ are compatible. In other words, for any map $f$ satisfying $f^\ast|_{H^n_\mathbb{Q}} = f^\ast|_{H^n_\mathbb{Z}} \otimes \mathbb{Q} = 0$, the Hopf invariant $H(f) \in \mathbb{Q}$ is well-defined; moreover, if it turns out that $f^\ast|_{H^n_\mathbb{Z}} = 0$, then $H(f)$ is an integer.
    \end{remark}
    \begin{remark}
        The functional cup product is homotopy invariant~\cite[Corollary 10.2]{steenrod_cohomology_1949}.
    \end{remark}
    \begin{remark} The functional cup product is graded-commutative~\cite[Theorem 6.2]{steenrod_cohomology_1949}. That is, for $u \in H^p(Y;\mathbb{Q})$, $v \in H^q(Y;\mathbb{Q})$, and $f \colon X \rightarrow Y$ such that $u \cup v = 0$ and $f^\ast u = f^\ast v = 0$, we have:
    \[u \cup_f v = (-1)^{pq} \, v \cup_f u.\]
    \end{remark}
    It follows from the previous remark that the Hopf invariant is identically zero for odd $n$, meaning that the main estimate is interesting only when $n$ is even.

\subsection{\texorpdfstring{$\operatorname{Im} J$}{Im J} and Hopf invariant}
    \begin{definition}
        Let $n > 0$ and $k \in \mathbb{Z}$. The number $k$ is called admissible if $k \in \operatorname{Im} H$, where
        \[H \colon \pi_{2n-1}(S^n) \rightarrow \mathbb{Z}\]
        is the Hopf invariant.
    \end{definition}
    \begin{remark}
        It is widely known~\cite[Section 4.B]{hatcher_algebraic_2002} that the sets of admissible values for various $n$ are as follows:
        \begin{itemize}
            \item if $n$ is odd, only the value 0 is admissible;
            \item if $n = 2, 4, 8$, all integers are admissible;
            \item for all other $n$, all even values are admissible.
        \end{itemize}
    \end{remark}

    If $S^{k} \subset S^{N}$ ($N>k$) is a standardly embedded sphere, its normal bundle is canonically trivial. Therefore, any map $S^k \rightarrow \operatorname{SL}_n(\mathbb{R})$ determines a framing, and using the Pontrjagin construction on the framed submanifold $S^k$ yields an element of $\pi_{n+k}(S^n)$. One can verify (see, e.g., \cite{putman_homotopy_nodate}) that this assignment defines a homomorphism $J \colon \pi_k(\operatorname{SL}_n(\mathbb{R})) \rightarrow \pi_{n+k}(S^n)$, which is called the \textbf{$J$-homomorphism}.

    \begin{proposition}\label{j-real}
        Let $k \in \mathbb{Z}$ be an admissible number (for a fixed $n$). Then there exists an element in $\operatorname{Im} J$ with this Hopf invariant. In other words, $\operatorname{Im} H = \operatorname{Im} (H \circ J)$.
    \end{proposition}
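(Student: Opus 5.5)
We need $\operatorname{Im} H = \operatorname{Im}(H\circ J)$ for the $J$-homomorphism $J\colon \pi_{n-1}(\operatorname{SL}_n(\mathbb{R}))\to\pi_{2n-1}(S^n)$ (here $k=n-1$). The inclusion $\operatorname{Im}(H\circ J)\subseteq \operatorname{Im} H$ is trivial. By the remark on admissible values, it therefore suffices to check three cases. For odd $n$ there is nothing to prove: $0=H(J(0))$. For $n=2,4,8$ we must realize Hopf invariant $1$. In all other even cases we must realize Hopf invariant $2$. In each case, realizing the generator is enough, because $H\circ J$ is a homomorphism $\pi_{n-1}(\operatorname{SL}_n(\mathbb{R}))\to\mathbb{Z}$: its image is a subgroup, so it contains $k\mathbb{Z}$ once it contains $k$.

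For even $n$ I will use the classical fact that $H(J(\xi))$ equals, up to sign, the Euler number of the oriented rank-$n$ bundle over $S^n$ clutched by $\xi\in\pi_{n-1}(\operatorname{SO}(n))=\pi_{n-1}(\operatorname{SL}_n(\mathbb{R}))$. Here $\operatorname{SO}(n)\hookrightarrow\operatorname{SL}_n(\mathbb{R})$ is a homotopy equivalence by polar decomposition. The reason for this fact is that the Pontrjagin construction on the framed $S^{n-1}\subset S^{2n-1}$ gives a map whose mapping cone is the Thom space of the clutched bundle, and the Hopf invariant is computed by the cup square of the Thom class, i.e. $u\cup u=e(\xi)\cdot u$ after pulling back along the zero section. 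Alternatively, one can work directly with the functional cup product definition of the previous subsection. Then $e\cup_f e$ is evaluated through the linking number of two regular fibres $f^{-1}(p)$ and $f^{-1}(p')$. For $J(\xi)$ these fibres are the sphere $S^{n-1}$ and its push-off along the first framing vector, and their linking number is the degree of the map $S^{n-1}\to S^{n-1}$ given by $x\mapsto \xi(x)e_1$. I expect this identification, pinning down $H\circ J$ as $\pm\deg(\xi\cdot e_1)$, to be the main technical step. I would follow the standard treatment, e.g. the cited notes \cite{putman_homotopy_nodate} or Hatcher, rather than re-derive it.

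Given that formula, the required elements are as follows.

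\textbf{Every even $n$, value $2$.} Take $\xi$ the clutching map of the tangent bundle $TS^n$, i.e. $x\mapsto$ the reflection in $x^\perp$ composed with the reflection in $e_1^\perp$. Its Euler number is $\chi(S^n)=2$, so $\pm2\in\operatorname{Im}(H\circ J)$.

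\textbf{$n=2,4,8$, value $1$.} Take $\xi(x)=$ left multiplication by $x$ in $\mathbb{C}$, $\mathbb{H}$, $\mathbb{O}$ respectively. For $x\in S^{n-1}$ this is an orthogonal map, since these algebras are normed division algebras. Then $x\mapsto \xi(x)e_1=x$ has degree $1$, and $J(\xi)$ is the Hopf map, with Hopf invariant $\pm1$.

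Since $H\circ J$ is a homomorphism, the images contain $2\mathbb{Z}$ and $\mathbb{Z}$ respectively. This matches $\operatorname{Im} H$ in every case by the remark on admissible values.
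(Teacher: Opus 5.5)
Your proof is correct. Its skeleton matches the paper's: odd $n$ is trivial, you realize Hopf invariant $1$ for $n=2,4,8$, and Hopf invariant $2$ for the remaining even $n$. Where you differ is the mechanism for showing these values lie in $\operatorname{Im}J$.

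The paper never computes $H\circ J$. For $n=2,4,8$ it observes that the Hopf fibration has standardly embedded spheres as fibres. Under the Pontrjagin--Thom correspondence, the Hopf map is therefore itself a framed standard $S^{n-1}$, hence lies in $\operatorname{Im}J$. For Hopf invariant $2$ the paper simply cites Wall and Whitehead.

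You instead use one formula, $H(J\xi)=\pm\deg\bigl(x\mapsto\xi(x)e_1\bigr)=\pm e(E_\xi)$, obtained from the Thom space of the clutched bundle or from the linking number of a fibre with its push-off. You then exhibit explicit clutching maps: left multiplication in $\mathbb{C},\mathbb{H},\mathbb{O}$, and the clutching map of $TS^n$, whose $J$-image is, up to sign, the Whitehead square $[\iota_n,\iota_n]$.

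Your route is uniform and essentially reproduces what the cited references contain. You do not even need to trust the identification with $TS^n$: the map $x\mapsto\rho_x\rho_{e_1}e_1$, with $\rho_x$ the reflection in $x^\perp$, hits $e_1$ exactly at $\pm e_1$. Its degree $1+(-1)^n=2$ for even $n$ can be checked by hand. The cost is that you quote the formula for $H\circ J$ from the literature. The paper's argument for $n=2,4,8$ is shorter and needs no such formula.

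You also make explicit a step the paper leaves implicit. Since $H$ and $J$ are homomorphisms, $\operatorname{Im}(H\circ J)$ is a subgroup, so realizing $1$ or $2$ already yields all of $\mathbb{Z}$ or $2\mathbb{Z}$.
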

    \begin{proof}
        For $n=2,4,8$, the map with Hopf invariant one can be chosen to be a smooth fiber bundle whose fibers are standardly embedded spheres~\cite[Examples 4.45--4.47]{hatcher_algebraic_2002}. Under the Pontrjagin--Thom isomorphism, the class of this map corresponds to a standardly embedded sphere equipped with a framing. Therefore, for $n=2,4,8$, the image $\operatorname{Im} J$ contains a class with Hopf invariant one.

        It is also known that for even $n$, there exists a homotopy class in $\operatorname{Im} J$ with Hopf invariant two~\cite{wall},~\cite{whitehead}.
    \end{proof}    
\section{The main bound}
    In this section, all (co)chain complexes and (co)homology groups are assumed to have rational coefficients.
    
    Let $K$ be an oriented rational homology manifold of dimension $n$, let $L$ be a simplicial complex, let $s$ be a maximal simplex of $L$, and let $f\colon K\rightarrow L$ be a simplicial map.

    Let $Q$ be the set of all simplices in the complex $K$ that map surjectively onto the simplex $s$ under $f$. Since $s$ is a maximal simplex, the following property holds: if a face $\tau$ of a simplex $\sigma$ lies in $Q$, then $\sigma$ itself also lies in $Q$. By the geometric realization $|Q|$ of the set $Q$, we mean the union of the relative interiors of the (geometric realizations of) simplices in $Q$. It follows from the property described above that the set $|Q|$ is open.

    We call two simplices in $Q$ incident if one is a face of the other. The incidence relation generates an equivalence relation on $Q$, whose equivalence classes we denote by $Q_i$. These sets satisfy the same property as $Q$, and their geometric realizations $|Q_i|$ are the connected components of $|Q|$. 

    For any set of simplices $S$, we denote by $\bar{S}$ the induced subcomplex, i.e. the closure of $S$ under taking faces of simplices.

    Having fixed this notation, we are now ready to state the following theorem.
    
    \begin{proposition}
        Let $K$ and $L$ be oriented rational homology manifolds of dimensions $2n-1$ and $n$, respectively, where $L$ is connected and non-empty. Let $s$ be a maximal simplex of $L$, and let $f \colon K \rightarrow L$ be a simplicial map such that $f^\ast|_{H^n_\mathbb{Q}} = 0$. Suppose that for each $Q_i$ (defined as above) there exists a vertex $v_i$ of the simplex $s$ such that the set
        \[R_i := f^{-1}(v_i) \cap |\overline{Q_i}|\]
        has dimension less than $n-1$. Then $H(f) = 0$.
    \end{proposition}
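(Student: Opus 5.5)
We will compute $H(f)$ by a cochain representative of $e$ supported near the simplex $s$, and then show that $e\cup_f e$, paired with $[K]$, splits into contributions from the components $Q_i$, each of which vanishes by the dimension hypothesis on $R_i$.

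\emph{Setting up the representative.} Represent $e$ by the simplicial cochain $u$ that takes the value $1$ on the oriented simplex $s$ and $0$ elsewhere. Then $u\cup u=0$ already at cochain level: $u\cup u$ is supported on simplices whose front and back $n$-faces are both $s$, and there are none. So we may take $b=0$. Since $f^\ast u$ is a coboundary, choose $a\in C^{n-1}(K)$ with $\delta a=f^\ast u$. Then $e\cup_f e$ is represented by $a\cup f^\ast u$, and
\[H(f)=\langle a\cup f^\ast u,[K]\rangle .\]
The cochain $f^\ast u$ is supported on the $n$-simplices of $Q$, hence on $\bigsqcup_i \overline{Q_i}$. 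It splits as $f^\ast u=\sum_i w_i$, where $w_i$ is the part supported on $Q_i$. Accordingly $H(f)=\sum_i\langle a\cup w_i,[K]\rangle$.

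\emph{Changing the representative with a different vertex.} For each $i$ we are free to replace $u$ by any cohomologous cochain; $H(f)$ is independent of choices. The key idea is to use a representative of $e$ adapted to $v_i$. Geometrically, by Poincaré duality (Proposition~\ref{poincare}), $w_i$ corresponds to the relative class of $f^{-1}(p)\cap|Q_i|$, the piece of the regular fiber in that component. This piece is an $(n-1)$-rational manifold by Proposition~\ref{preimage}. The pairing $\langle a\cup w_i,[K]\rangle$ is the evaluation of $a$, which is the dual of an $n$-chain bounding the whole fiber $f^{-1}(p)$, on this fiber piece, i.e.\ a linking-type number.

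We deform $p$ inside $s$ toward the vertex $v_i$. The fibers over points of the segment from $p$ to $v_i$ sweep out, inside $|\overline{Q_i}|$, a cobordism from the fiber piece $F_i=f^{-1}(p)\cap|\overline{Q_i}|$ to $R_i=f^{-1}(v_i)\cap|\overline{Q_i}|$. In the simplicial model this is the preimage of the closed segment, which lies in $|\overline{Q_i}|$. This gives, via the naturality square in the Remark after Proposition~\ref{poincare}, that the class of $F_i$ in $H_{n-1}(|\overline{Q_i}|)$ factors through $H_{n-1}(R_i)$. That group is zero, since $\dim R_i<n-1$. Hence $w_i$, viewed in $H^{n}(K\,|\,|\overline{Q_i}|)$, is the coboundary of a cochain $c_i$ supported near $\overline{Q_i}$. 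Then
\[\langle a\cup w_i,[K]\rangle=\langle a\cup\delta c_i,[K]\rangle=\pm\langle \delta a\cup c_i,[K]\rangle=\pm\langle f^\ast u\cup c_i,[K]\rangle .\]
The last term is a cup product of $f^\ast u$ with a cochain supported near the fiber piece. It vanishes because $f^\ast u\cup c_i$ factors through $f^\ast(u\cup u')$-type products that are zero for the same support reason that made $u\cup u=0$. Alternatively, the exact-cochain version of the argument can be carried out in $H^{2n-1}(K\,|\,|\overline{Q_i}|)\cong H_0(\overline{Q_i})$, where the class is computed by intersecting two pushed-off copies of the fiber.

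\emph{Main obstacle.} The delicate step is making the cone/segment argument rigorous at the chain level. We must show that the relative class of $f^{-1}(p)\cap|\overline{Q_i}|$ is supported on $R_i$ up to homology in $|\overline{Q_i}|$, and then transport the vanishing through Poincaré duality to $w_i$ while keeping control of the boundary pieces of $\overline{Q_i}$ lying over faces of $s$. For the first part we will use the simplicial structure: on each simplex $\sigma\in Q_i$, the map $f$ is a join-type projection, so the straight-line contraction toward the face $\sigma\cap f^{-1}(v_i)$ is well defined and compatible across faces. This yields an explicit deformation retraction of $f^{-1}(\text{segment})\cap|\overline{Q_i}|$ onto $R_i$. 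We will try this first, and fall back on a spectral-sequence/Mayer–Vietoris count of dimensions if the contraction fails to glue.
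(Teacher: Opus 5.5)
\textbf{There is a genuine gap in the final vanishing step.} Your topological core is the paper's. The map $H_{n-1}(f^{-1}(p)\cap|Q_i|)\to H_{n-1}(|\overline{Q_i}|)$ factors through $H_{n-1}(R_i\cup|Q_i|)\cong H_{n-1}(R_i)=0$, using the straight-line push toward $\sigma\cap f^{-1}(v_i)$. (It is $R_i\cup|Q_i|$, not the full preimage of the segment, that lies in $|\overline{Q_i}|$.) So by Poincar\'e duality $f^\ast u=\delta c$ with $c=\sum_i c_i$ vanishing off $|\overline{Q}|$.

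The problem is what you do next. Summing your identity over $i$ gives only $H(f)=\pm\langle f^\ast u\cup c,[K]\rangle$, and this is a tautology. It holds for every $c$ with $\delta c=f^\ast u$. Moreover $\langle f^\ast u\cup c,[K]\rangle$ is just $\pm H(f)$ recomputed with $c$ as the bounding cochain and the factors swapped. The swap is harmless by Steenrod's $\cup_1$-formula, since $f^\ast u\cup_1 f^\ast u=f^\ast(u\cup_1 u)=0$ for dimension reasons.

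Your reason for the vanishing is also false. The cochain $c_i$ is not pulled back from $L$, so $f^\ast u\cup c_i$ is not an ``$f^\ast(u\cup u')$-type'' product. Supports do not separate either: take an ordered $(2n-1)$-simplex of $Q_i$ whose front $n$-face maps onto $s$; its back $(n-1)$-face lies in $\overline{Q_i}$, where $c_i$ need not vanish. Your fallback via $H^{2n-1}(|K|\mid|\overline{Q_i}|)\cong H_0(|\overline{Q_i}|)\cong\mathbb{Q}$ proves nothing, because that group is nonzero.

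\textbf{The missing idea} is to put a \emph{displaced} representative of $e$ in the second slot, so that supports become genuinely disjoint. The paper does this as follows.
\begin{enumerate}
\item Take $u\in Z^n(|L|\mid p)$ and $v\in Z^n(|L|\mid q)$ with $q\notin|s|$, and choose $b\in C^{2n-1}(|L|,S_p+S_q)$ with $\delta b=u\cup v$. Your trick $b=0$ no longer suffices here.
\item Take $a'\in C^{n-1}(|K|\mid|\overline{Q}|)$ with $\delta a'=f^\ast u$. Then $a'\cup f^\ast v-f^\ast b$ lies in $C^{2n-1}\big(|K|,(|K|\setminus|\overline{Q}|)+S'_q\big)$.
\item Since $f(|\overline{Q}|)\subset|s|$ and $q\notin|s|$, the union $(|K|\setminus|\overline{Q}|)\cup S'_q$ is all of $|K|$. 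By the small-chains quasi-isomorphism this relative complex has zero cohomology, so $e\cup_f e=0$.
\end{enumerate}
Geometrically, the chain bounding $f^{-1}(p)$ inside $|\overline{Q}|$ misses $f^{-1}(q)$, so the linking number is zero. With the same $u$ in both slots you are computing a self-linking with an uncontrolled push-off, and no support argument can close that. In particular, the step you flag as the main obstacle (gluing the retraction) is the easy part; the real work is this disjoint-support step.
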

    
    \begin{proof}
        Let $p \in \operatorname{Int}(|s|)$, and let $q \in |L|$ be any point outside $|s|$. Define $S_p = |L| \setminus \{p\}$, $S_q = |L| \setminus \{q\}$, $S'_p = |K| \setminus f^{-1}(p)$, and $S'_q = |K| \setminus f^{-1}(q)$. 
        Since the map \mbox{$H_0(p) \rightarrow H_0(|L|)$} is surjective, Theorem~\ref{poincare} implies that the map $H^n(|L| \mid p) \rightarrow H^n(|L|)$ is also surjective. Let us choose a cochain $u \in Z^n(|L| \mid p)$ such that the class $[u]$ equals $e$ in $H^n(|L|)$. Similarly, we choose $v \in Z^n(|L| \mid q)$ such that $[v]$ equals $e$ in $H^n(|L|)$. Then the cup product $u \cup v$ is defined as an element of the group $Z^{2n}(|L|, S_p + S_q)$. Since the map $C^\bullet(|L|, S_p \cup S_q) \rightarrow C^\bullet(|L|, S_p + S_q)$ is a quasi-isomorphism~\cite[Section 3.1, p. 204]{hatcher_algebraic_2002}, and $H^\bullet(|L|, S_p \cup S_q) = H^\bullet(|L|, |L|) = 0$, there exists a cochain $b \in C^{2n-1}(|L|, S_p + S_q)$ such that $\delta b = u \cup v$.
        
        Consider the following commutative diagram:

        \begin{center}      
        \begin{adjustbox}{width=\textwidth}
        \begin{tikzcd}
             C^n(|L| \mid p) \arrow{r}{f^\ast} \arrow{d} & C^n(|K| \mid f^{-1}(p)) \arrow{r}{\operatorname{in}^\ast} \arrow{d} & C^n(|K| \mid |\overline{Q}|) \arrow{d} & C^{n-1}(|K| \mid |\overline{Q}|) \arrow[swap]{l}{\delta} \arrow{d}  \arrow{r} & C^{2n-1}(|K| , (|K|\setminus |\overline{Q}|) + S'_q) \arrow{d}\\
             C^{n}(|L|)\arrow{r}{f^\ast} & C^n(|K|) \arrow[equal]{r} & C^n(|K|) & C^{n-1}(|K|) \arrow[swap]{l}{\delta} \arrow{r} & C^{2n-1}(|K|) \\[-20pt]
             u \arrow[mapsto]{r} & f^\ast u \arrow[equal]{r} & f^\ast u & a \arrow[mapsto]{l} \arrow[mapsto]{r} & u \cup_f v
            \end{tikzcd}
        \end{adjustbox}
        \end{center}

        In this diagram, the rightmost horizontal arrows in both rows represent the map \mbox{$(-)\cup f^\ast v-f^\ast b$}. If we trace the cocycle\footnote{More precisely, its image in $C^n(|L|)$.} $u$ through the bottom row of the diagram, we obtain the cocycle $u \cup_f v$, which represents the class $e \cup_f e$. If we can find a preimage $a'\in C^{n-1}(|K| \mid |\overline{Q}|)$ of the element $(f\circ\operatorname{in})^\ast u \in C^n(|K| \mid |\overline{Q}|)$ under $\delta$, then, as the cochain $a\in C^{n-1}(|K|)$ satisfying $\delta a = f^\ast u$, we can take the image of $a'$ in $C^{n-1}(|K|)$. In this case, it follows that $e \cup_f e$ is represented by a cocycle in $C^{2n-1}\big(|K| , (|K|\setminus |\overline{Q}|) + S'_q\big)$. Since the map $C^\bullet\big(|K|, (|K|\setminus |\overline{Q}|) \cup S'_q\big)\rightarrow C^\bullet\big(|K|, (|K|\setminus |\overline{Q}|) + S'_q\big)$ is a quasi-isomorphism, and $H^\bullet\big(|K|, (|K|\setminus |\overline{Q}|) \cup S'_q\big)=H^\bullet(|K|, |K|)=0$, we conclude that $e \cup_f e = 0$, and thus $H(f)=0$.

        Thus, it is enough to show that the map $\operatorname{in}^\ast\colon H^n(|K| \mid f^{-1}(p)) \rightarrow H^n(|K| \mid |\overline{Q}|)$ induced by the inclusion $f^{-1}(p) \subset |\overline{Q}|$ is zero. By Theorem~\ref{poincare}, it is enough to verify that the map $\operatorname{in}_\ast\colon H_{n-1}(f^{-1}(p)) \rightarrow H_{n-1}(|\overline{Q}|)$ is zero. It is clear that the sets $Z_i := f^{-1}(p) \cap Q_i$ are precisely the connected components of $f^{-1}(p)$. Therefore, it suffices to check that for each $i$, the map $\operatorname{in}_\ast\colon H_{n-1}(Z_i) \rightarrow H_{n-1}(|\overline{Q_i}|)$ is zero.
        
        Let $W_i := R_i \cup |Q_i|$. It is clear that $W_i$ deformation retracts onto $R_i$, which has dimension less than $n-1$. Thus, $H_{n-1}(W_i) \cong H_{n-1}(R_i) = 0$. On the other hand, the map $\operatorname{in}_\ast\colon H_{n-1}(Z_i) \rightarrow H_{n-1}(|\overline{Q_i}|)$ factors through $H_{n-1}(W_i)$. Therefore, it is zero, which completes the proof.
    \end{proof}
    \begin{remark}
        The proof of the theorem presented above is based on a geometric proof\footnote{In~\cite{bludov_essential_2025}, the proof is also geometric.} that has been translated into a cohomological proof "via Poincaré duality". Let the points $p$ and $q$ lie in the interiors of distinct maximal simplices of the complex $|L|$. As noted in Theorem~\ref{preimage}, the preimages $f^{-1}(p)$ and $f^{-1}(q)$ define $(n-1)$-cycles. The Hopf invariant can be computed via its classical definition as the linking number\footnote{The linking number can be defined for simplicial cycles; see~\cite{seifert_1980}.} of the preimages $\operatorname{lk}\big(f^{-1}(p), f^{-1}(q)\big)$~\cite{eilenberg_continuous_1940}. Under the assumptions of the theorem, one can then find a chain $C$ contained in $|\overline{Q}|$ (and therefore clearly disjoint from $f^{-1}(q)$) whose boundary is $\partial C = f^{-1}(p)$. More specifically, one can choose a chain supported on $\bigcup_i \overline{f^{-1}([p, v_i)) \cap |Q_i|}$. The conditions of the theorem guarantee that its boundary is indeed $f^{-1}(p)$. Since $C$ does not intersect $f^{-1}(q)$, it follows by definition that the Hopf invariant is zero. The chain $C$ corresponds to the cochain $a'$ in the cohomological proof above.
    \end{remark}

    \begin{remark}
        The previous theorem provides a refinement to Theorem II of~\cite{eilenberg_continuous_1940}, as it shows that one can replace $(ii)$ in Theorem II by a weaker condition.
    \end{remark}
    
    From this theorem, we obtain the bound.
    \begin{theorem} \label{lower}
        Let $K$ and $L$ be oriented rational homology manifolds of dimensions $2n-1$ and $n$, respectively, and let $s$ be a maximal simplex of the complex $L$. If $H(f) \neq 0$, then 
        \[N(f,s) > \frac{n^3}{6}.\]
    \end{theorem}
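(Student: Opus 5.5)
We argue by contraposition, using the preceding proposition. First, $H(f)\neq 0$ forces $f^\ast|_{H^n_\mathbb{Q}}=0$ (otherwise $H(f)$ is undefined). By the proposition, there is a component $Q_i$ such that for \emph{every} vertex $v$ of $s$ the set $f^{-1}(v)\cap|\overline{Q_i}|$ has dimension at least $n-1$. It remains to show that such a component alone contains more than $n^3/6$ maximal simplices mapping onto $s$.

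\textbf{Combinatorial encoding.} Every maximal simplex $\sigma\in Q_i$ has dimension $2n-1$ and surjects onto $s=[v_0,\dots,v_n]$. We record its type $a(\sigma)=(a_0,\dots,a_n)$, where $a_j=\#(f^{-1}(v_j)\cap\sigma)\ge 1$ and $\sum a_j=2n$. Then $f^{-1}(v_j)\cap|\sigma|$ is a face of dimension $a_j-1$, and $f^{-1}(p)\cap|\sigma|\cong\prod_j\Delta^{a_j-1}$ is an $(n-1)$-cell of $Z_i=f^{-1}(p)\cap|Q_i|$. Any simplex of $R_{v_j}=f^{-1}(v_j)\cap|\overline{Q_i}|$ is a face of a simplex of $Q_i$, hence of a maximal one. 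So $\dim R_{v_j}\ge n-1$ means some maximal $\sigma_j\in Q_i$ has $a_j(\sigma_j)\ge n$. The sum constraint then forces $a(\sigma_j)=n e_j+\sum_{k\ne j}e_k$, a \emph{corner} type. Thus all $n+1$ corner types occur in $Q_i$.

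\textbf{Adjacency and paths.} Next we use that $Z_i$ is a connected $(n-1)$-pseudomanifold (Theorem~\ref{preimage}) with this product-cell structure. Two maximal simplices of $Q_i$ whose cells in $Z_i$ share a codimension-one face share a $(2n-2)$-face lying in $Q$. Such a face is obtained by deleting one vertex from a class with $a_j\ge2$, so passing to the neighbour moves one unit from some coordinate $a_j$ to some $a_k$. Hence the maximal simplices of $Q_i$ form a connected graph in which each step changes the type by $e_k-e_j$. Along any path from $\sigma_v$ to $\sigma_w$, the coordinate $a_v$ must descend from $n$ to $1$ one unit at a time.

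\textbf{Counting, and the main obstacle.} The target is roughly $\binom{n+1}{2}\cdot\frac n3\approx n^3/6$. For each unordered pair $\{v,w\}$ of vertices of $s$, we would exhibit about $n/3$ maximal simplices that are ``charged'' to that pair and to no other. The first attempt is to charge to $\{v,w\}$ the simplices on a path from $\sigma_v$ to $\sigma_w$ with $a_v+a_w$ large, say $a_v+a_w\ge\frac{4n}{3}$ with $a_v,a_w$ both between $n/3$ and $n$. Such a simplex has at most $2n-(n+1-2)$ units to spare among the remaining coordinates, so at most one pair can be ``dominant'' in it, which prevents double counting. For this to work, the path must actually traverse these intermediate values, and this is the step I expect to be the main obstacle. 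A path could a priori go from $\sigma_v$ to $\sigma_w$ by spreading mass over many coordinates and never pass through types where only $v$ and $w$ are large.

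To close this gap, I would exploit the pseudomanifold structure of $Z_i$ rather than mere connectivity. The cell $\Delta^{n-1}\times\mathrm{pt}$ of a corner type has all $n$ of its facets in the interior of $Z_i$. Following the facets transversal to the direction of $w$ forces neighbours whose type moves mass specifically from $v$ to $w$. Iterating this ``flow'' argument, or inducting on $n$ via links of the faces $f^{-1}(v)\cap\sigma$, should yield, for each pair, a chain of types $(n-k)e_v+(k+1)e_w+\sum_{\text{others}}e_u$ of length comparable to $n$. Summing over pairs, with the overlap controlled as above, would give $N(f,s)\ge\#\text{maximal simplices in }Q_i>n^3/6$.
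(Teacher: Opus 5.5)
The setup matches the paper. The proposition gives a component $Q_i$ in which all $n+1$ corner types occur. Adjacent maximal simplices of $Q_i$ differ in type by $0$ or by $e_k-e_j$. Along a path between two corners, the coordinate $a_v$ passes through every value from $n$ down to $1$.

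\textbf{The gap.} The gap is the counting step, and the route you sketch cannot close it. You only use two facts: all corners occur, and the type graph is connected with unit moves. Both are consistent with only about $n^2$ distinct types, for instance a star of paths from each corner to one central type. So no argument that merely places simplices on paths between corners can reach $n^3/6$. In particular, nothing forces the pair types $(n-k)e_v+(k+1)e_w+\sum e_u$ to occur.

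The proposed flow argument does not force them either. The $n$ facets of a corner simplex $\sigma_v$ that still surject onto $s$ are obtained by deleting one of the $n$ vertices over $v$. The simplex on the other side acquires a new vertex whose image under $f$ can be any vertex of $s$. So you cannot steer mass from $v$ toward a prescribed $w$.

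\textbf{The missing idea.} What you need is a local degree bound, which supplies the extra factor of $n$. Take $\sigma\in\operatorname{Max}(Q_i)$ and any vertex $u$ of $\sigma$ whose class has $a_j(\sigma)\ge 2$. The facet $\sigma\setminus u$ still surjects onto $s$. Since $K$ is a pseudomanifold, the second maximal simplex through this facet exists and lies in $Q_i$. Hence every vertex of the adjacency graph has degree at least $n$, and at least $n+1$ unless it is a corner.

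The paper combines this with exactly your level sets. Since $\|a-\mathbf{y}_k\|_{\ell^1}=2(n-a_k)$, every level $a_k=n-j$ is met. For each of the $n+1$ corners $k$, choose simplices at levels $a_k=n,n-3,n-6,\dots$ as long as $a_k\ge n/2+2$. This gives about $(n+1)n/6$ simplices whose types are pairwise at $\ell^1$-distance at least $6$. Neighbours have types within distance $2$, so the closed neighbourhoods of these simplices are disjoint. Counting them gives $N(f,s)\ge (n+2)(n+1)\lfloor n/6-2/3\rfloor+(n+1)^2$. This exceeds $n^3/6$ for all even $n>2$ except $n=8$. There, one further well-separated simplex at level $a_0=n/2+1$, together with its at least $9$ neighbours, suffices.
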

    \begin{proof}
        We may assume that $n$ is even and $n > 2$.
        
        Since $H(f) \neq 0$, there exists a component $Q_i$ such that for every vertex $v_k$ of the simplex $s = [v_0, \dots, v_n]$, the subcomplex $f^{-1}(v_k) \cap |\overline{Q_i}|$ has dimension at least $n-1$. This means that for each vertex $v_k$, there exists a $(2n-1)$-simplex $\sigma \in Q_i$ with $n$ of its vertices mapping to $v_k$. We call such a simplex $k$-extreme.
        
        Let us define a graph $G$ on the set $\operatorname{Max}(Q_i)$ of maximal simplices of $Q_i$: an edge is drawn between two vertices if the corresponding simplices share a common face of codimension 1. Since $Z_i$ is a pseudomanifold (and, in particular, is strongly connected), the graph $G$ is connected.

        For each maximal simplex $\sigma\in Q_i$, we define $\operatorname{vec}(\sigma)\in \mathbb{N}^{n+1}$ as the vector of counts of vertices of $\sigma$ in the preimages of the vertices of the simplex $s=[v_0, \dots, v_n]$:
        \[(\operatorname{vec}(\sigma))_k := \# \big(f^{-1}(v_k)\cap \sigma\big).\]

        Consider $\sigma=[u_0,\dots,u_{2n-1}]\in\operatorname{Max}(Q_i)$ and $\tau\in\operatorname{Max}(K)$ such that $\sigma\cap\tau=[u_0,\dots,\hat{u}_k, \dots, u_{2n-1}]$ with $(\operatorname{vec}(\sigma))_k\neq 1$. It is clear that in this case $\tau\in\operatorname{Max}(Q_i)$. Thus, the degree of any vertex in the graph $G$ is at least $n$. Furthermore, if a vertex does not correspond to a $k$-extreme simplex (for any $k$), its degree is at least $n+1$.

        Let $\Delta_{\operatorname{st}}=\{\mathbf{x}=(x_0,\dots, x_{n})\in \mathbb{R}^{n+1} \mid \sum_k x_k=2n,\, x_k\geq 1\}$, which is the simplex with vertices $\mathbf{y}_k:=(1,\dots, 1)+(n-1)\overrightarrow{e_k}$. Let $\mathcal{A}^k_j:=\{\mathbf{x}\in \Delta_{\operatorname{st}}\cap \mathbb{Z}^{n+1} \mid \|\mathbf{x}-\mathbf{y}_k\|_{\ell^1}=2j\}$. It is easy to see that for $0\leq k \leq n$ and $0\leq j \leq \frac{n}{2}-1$, the sets $\mathcal{A}^k_{j}$ are disjoint. 
        
        To each edge $e$ between vertices $\sigma$ and $\tau$ of the graph $G$, we associate the segment $\operatorname{vec}(e):=[\operatorname{vec}(\sigma), \operatorname{vec}(\tau)] \subset \mathbb{R}^{n+1}$. This yields a connected metric graph $G'\subset \Delta_{\operatorname{st}}$, where $G'=\operatorname{vec}(G)$. Note that for a $k$-extreme simplex $\sigma$, we have $\operatorname{vec}(\sigma)=\mathbf{y}_k$. If two simplices $\sigma_1,\, \sigma_2 \in \operatorname{Max}(Q_i)$ share a common face of codimension 1, then $\operatorname{vec}(\sigma_1)$ and $\operatorname{vec}(\sigma_2)$ either coincide or differ by one in exactly two coordinates. Consequently, all edges of the graph $G'$ have length 2 in the $\ell^1$-metric, and moving along an edge changes the distance to any $\mathbf{w}_k$ by either $0$ or $\pm 2$. By the connectedness of $G'$, it follows that $\mathcal{A}^k_{j} \cap \operatorname{V}(G') \neq \varnothing$ for all $0\leq k\leq n$ and $0\leq j < \frac{n}{2}$.

        For each pair $(k,j)$ with $0\leq k \leq n$ and $0\leq j \leq \left\lfloor\frac{n}{6} - \frac{2}{3}\right\rfloor$, we fix a vertex $w^k_j \in \operatorname{V}(G')\cap\mathcal{A}^k_{3j}$. Since the vertices $\{w^k_j\}_{k,j}$ are at a distance of at least 6 from one another, they do not share any common neighbors in $G'$. Letting $w^k_j=\operatorname{vec}(\sigma^k_j)$ for $\sigma^k_j\in\operatorname{V}(G)=\operatorname{Max}(Q_i)$, it follows that the vertices $\sigma^k_j\in\operatorname{V}(G)$ also share no common neighbors in $G$. Thus, in the graph $G$, we have found $(n+1)\left\lfloor\frac{n}{6}-\frac{2}{3}\right\rfloor$ vertices of degree at least $n+1$ (corresponding to the vertices $w^k_j$ with $j\neq 0$) and $n+1$ vertices of degree at least $n$ (corresponding to the vertices $w^k_0$), where all these vertices and their neighbors are distinct. Consequently,
        \[N(f,s)\geq \#\operatorname{Max}(Q_i)=\#\operatorname{V}(G)\geq f(n),\]
        where
        \[f(n)=(n+2)(n+1)\left\lfloor\frac{n}{6}-\frac{2}{3}\right\rfloor + (n+1)^2.\]
        It is straightforward to verify that $f(n)>\frac{n^3}{6}$ for all even $n>2$ except $n=8$. For the case $n=8$, we note that $\mathcal{A}^0_{\frac{n}{2}-1} \cap \operatorname{V}(G') \neq \varnothing$ by the connectedness of $G'$. Furthermore, it is easy to see that any $w\in \mathcal{A}^0_{\frac{n}{2}-1} \cap \operatorname{V}(G')$ is at a distance of at least $6$ from all the vertices $w^k_j$. Therefore, the vertex $\sigma$ of the graph $G$ such that $\operatorname{vec}(\sigma)=w$, along with all its neighbors in $G$ (of which there are at least $n+1=9$), have not been counted previously. One can readily check that $f(8)+10>\frac{8^3}{6}$.
    \end{proof}

    \begin{remark}
        This bound is clearly non-trivial in that without requiring non-triviality of the homotopy class $[f]$ one gets $N(f,s)=0$ for a constant map. Moreover, one could be tempted to try to prove a lower bound by some simple combinatorial argument which would only require that $N(f,s)\neq 0$ instead of a homotopical condition. This does not work as it is easy to come up with many examples where $N(f,s)$ is linear in $n$.

        For example, consider the $(n-1)$-th simplicial suspension $\Sigma^{n-1}g$ of a simplicial map $g:\partial \Delta^{n+1} \to \partial \Delta^{2}$ which sends one of the vertices of $\partial \Delta^{n+1}$ to some vertex $A$ of $\partial \Delta^{2}$ and the rest of the vertices of $\partial \Delta^{n+1}$ to some other vertex $B$ of $\partial \Delta^{2}$. Then for some top simplex $s$ we have $N(\Sigma^{n-1}g,s)=n+1$.

        It is not at all clear what happens with $N(f,s)$ when $[f]$ is non-trivial but $H(f)=0$.
    \end{remark}
    
\section{Examples}
    In this section, we describe a family of simplicial maps $f \colon S^{2n-1} \rightarrow S^n$ that realize all admissible values of the Hopf invariant and have bounded $N(f,s)$.

    Given a (smooth) framed $(m-n)$-submanifold $P^{m-n} \subset M^m$, the Pontrjagin construction returns a corresponding map $f \colon M^m \rightarrow S^n$:

    \[ \begin{tikzcd}
    P \times D^{n} \arrow[hook]{r}{\xi} \arrow[swap]{d}{\pi} & M \arrow{d}{f} \arrow[hookleftarrow]{r} & W \arrow{d} \\%
    D^{n} \arrow{r}{\phi}& S^{n} & \ast \arrow{l}
    \end{tikzcd}
    \]

    In this diagram, $\xi$ is an embedding such that $\xi|_{P \times \{0\}}$ is the inclusion $P \subset M$, $\pi$ is the projection onto the second factor, $\phi$ collapses the boundary of the ball, and $W$ is the closure of $M \setminus \xi(P \times D^{n})$.

    We now describe a way to realize the Pontrjagin construction simplicially.

    We define a simplicial map\footnote{There is a standard triangulation on the product $\Delta^{n} \times I$.} $\Phi \colon \Delta^{n} \times I \rightarrow \Delta^{n+1}$ as follows:
    \[\Phi(e_i, 0) = e_i, \quad \Phi(e_i, 1) = e_{n+1},\] 
    where $\Delta^k = [e_0, \dots, e_k]$ and $I = [0,1]$.

    Let $U = \Delta^{n} \times \{0\} \cup \partial\Delta^{n} \times I$; then $\operatorname{Im}\Phi|_U \subset \partial \Delta^{n+1}$. We triangulate $D^{n}$ and $S^{n}$ (smoothly) so that $D^{n} \cong |U|$, $S^{n} \cong |\partial \Delta^{n+1}|$. Let $\phi_\Delta = |(\Phi|_U)|$.

    Smoothly triangulate the manifold $P$. A choice of the orderings on the vertex set of the triangulation of $P$ and on the simplicial complex $U$ induces a smooth triangulation on $P \times D^{n}$ such that $\pi$ is simplicial. In particular, the boundary $\xi(P \times \partial D^{n})$ of the manifold $W$ is smoothly triangulated, $\phi_\Delta \circ \pi|_{P \times \partial D^{n}} = \operatorname{const}_{e_{n+1}}$. This triangulation on the boundary can be extended to a (smooth) triangulation of $W$~\cite[Theorem 10.6]{munkres_elementary_1967}. Thus, we obtain a triangulation of the manifold $M$.

    The simplicial maps $\phi_\Delta\circ \pi$ and $\operatorname{const}_{e_{n+1}}|_W$ glue to a simplicial map $f_\Delta$. It is easy to see that the maps $f$ and $f_\Delta$ are homotopic.
    

    We will construct the desired family of simplicial maps using the Pontrjagin construction. By Proposition~\ref{j-real}, any admissible value of the Hopf invariant is realized by a class in $\operatorname{Im} J \subset \pi_{2n-1}(S^n)$, so we may assume that $P = S^{n-1} \subset M = S^{2n-1}$ is a standardly embedded sphere. We can triangulate $P$ as the boundary of a simplex $\partial\Delta^n$.

    Let $s$ be an arbitrary maximal simplex of $S^n \cong \partial \Delta^{n+1}$. Then exactly one $n$-simplex $\tau$ maps into $s$ under $\phi_\Delta$. It is clear that $N(f,s)$ is the number of maximal simplices in $P \times \tau$.

    \[\#\operatorname{Max}(P \times \tau) = \#\operatorname{Max}(\partial\Delta^n)\cdot \#\operatorname{Max}(\Delta^{n-1} \times \tau)=(n+1)\binom{2n-1}{n}. \]
    
    Thus, the following theorem holds:
    \begin{theorem} \label{upper}
        For any admissible value $k$ (for a fixed $n$), there exists a simplicial complex $K$ triangulating the $(2n-1)$-sphere $|K| \cong S^{2n-1}$ and a simplicial map $f \colon K \rightarrow \partial \Delta^{n+1}$ such that $H(f) = k$, and 
        \[N(f,s) = (n+1)\binom{2n-1}{n}\]
        for any maximal face $s$ of the boundary simplex $\partial \Delta^{n+1}$.
    \end{theorem}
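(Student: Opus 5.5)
The plan is to assemble Theorem~\ref{upper} from the simplicial Pontrjagin construction just described, in three steps: realize the homotopy class, check the Hopf invariant, and count the simplices.

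\textbf{Step 1: choose the framed sphere.} Fix an admissible $k$. By Proposition~\ref{j-real} there is a class $\alpha\in\operatorname{Im}J\subset\pi_{2n-1}(S^n)$ with $H(\alpha)=k$. By the definition of $J$, $\alpha$ is the Pontrjagin class of a standardly embedded sphere $P=S^{n-1}\subset M=S^{2n-1}$ with some framing. The Pontrjagin construction then gives a smooth map $f\colon S^{2n-1}\to S^n$ with $[f]=\alpha$, together with the tubular embedding $\xi\colon P\times D^n\hookrightarrow M$ and the complement $W$.

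\textbf{Step 2: triangulate and pass to the simplicial map.} Triangulate $P$ as $\partial\Delta^n$, $D^n$ as $|U|$ and $S^n$ as $|\partial\Delta^{n+1}|$. Using vertex orderings, triangulate $P\times|U|$ so that $\pi$ is simplicial. Extend the induced triangulation of $\xi(P\times\partial D^n)$ over $W$ by \cite[Theorem 10.6]{munkres_elementary_1967}. This produces a complex $K$ with $|K|\cong S^{2n-1}$ and the simplicial map $f_\Delta$. Since $f_\Delta\simeq f$, homotopy invariance of the functional cup product gives $H(f_\Delta)=H(f)=k$.

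\textbf{Step 3: count the simplices over $s$.} Fix a maximal face $s$ of $\partial\Delta^{n+1}$. Every simplex of $W$ maps to the vertex $e_{n+1}$, so no maximal simplex of $W$ covers $s$. On $P\times U$ the map is $\phi_\Delta\circ\pi$, so the maximal simplices covering $s$ are exactly the maximal simplices of $P\times\tau$, where the $\tau$ are the $n$-simplices of $U$ with $\Phi(\tau)=s$.

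The main point to verify is that exactly one such $\tau$ exists for each of the $n+2$ faces $s$. For this I use the standard staircase triangulation of $\Delta^n\times I$ and restrict it to $U$.
\begin{itemize}
\item The face $s=[e_0,\dots,e_n]$ is hit only by the bottom simplex $\Delta^n\times\{0\}$.
\item The face $s$ omitting $e_i$ lies over the facet $\partial_i\Delta^n\times I$. That prism contains exactly one $n$-simplex whose vertices map to all of $s$: the one containing both $(e_j,0)$ for every $j\ne i$ and a vertex at level $1$. The other simplices of the prism repeat $e_{n+1}$, so they map to lower-dimensional faces.
\end{itemize}

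Then
\[
N(f_\Delta,s)=\#\operatorname{Max}(\partial\Delta^n)\cdot\#\operatorname{Max}(\Delta^{n-1}\times\Delta^n)=(n+1)\binom{2n-1}{n},
\]
since the staircase triangulation of $\Delta^a\times\Delta^b$ has $\binom{a+b}{a}$ top simplices. The count in Step 3, especially the uniqueness of $\tau$, is the only part needing care; Steps 1 and 2 follow directly from the results above.
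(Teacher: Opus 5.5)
Your proposal is correct and follows the paper's argument essentially verbatim. You realize the admissible value by a standardly embedded framed $S^{n-1}$ via Proposition~\ref{j-real}, triangulate the Pontrjagin construction, and count the maximal simplices of $P\times\tau$. Your explicit staircase check that exactly one $n$-simplex $\tau$ of $U$ maps onto each facet $s$ of $\partial\Delta^{n+1}$ is a useful elaboration of the step the paper simply calls ``clear''.
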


\section*{Acknowledgments} I would like to thank my advisor Sem{\"e}n Podkorytov for helpful discussions. I would also like to thank Gregory Arone, Fedor Petrov, Andrey Ryabichev and Fedor Vylegzhanin for their valuable remarks.

  \bibliographystyle{plain}
   \bibliography{reference}

\end{document}